\newtheorem{thm}{Theorem}[section]
\newtheorem{problem}{Problem}
\theoremstyle{remark}
\newtheorem{example}[thm]{Example}
\newtheorem{remark}[thm]{Remark}
\newtheorem{defin}{Definition}
\def\C{\mathbb{C}}
\def\Z{\mathbb{Z}}
\def\P{\mathbb{P}}
\def\R{\mathbb{R}}
\def\Volume{{\rm Volume}}
\def\wt{\widetilde}
\def\om{\omega}
\def\l{\lambda}
\def\w0{\overline{w_0}}
\def\id{\rm id}
\title[N.--O. polytopes of flag varieties for classical groups]
{Newton--Okounkov polytopes of flag varieties for classical groups}
\author{Valentina Kiritchenko}
\email{vkiritch@hse.ru}
\thanks{The study has been partially funded by the Russian Academic Excellence Project '5-100'.}
\address{Laboratory of Algebraic Geometry and Faculty of Mathematics\\
National Research University Higher School of Economics, Russian Federation\\
Usacheva str. 6, 119048 Moscow, Russia}
\address{Institute for Information Transmission Problems, Moscow, Russia}
\date{}
\keywords{Newton--Okounkov convex body, flag variety, FFLV polytope}
\dedicatory{To my teacher R.K.Gordin with gratitude and admiration}
\begin{document}
\begin{abstract}
For classical groups $SL_n(\C)$, $SO_n(\C)$ and $Sp_{2n}(\C)$, we define uniformly geometric valuations on the corresponding complete flag varieties.
The valuation in every type comes from a natural coordinate system on the open Schubert cell, and is combinatorially related to the Gelfand--Zetlin pattern in the same type.
In types $A$ and $C$, we identify the corresponding Newton--Okounkov polytopes with the Feigin--Fourier--Littelmann--Vinberg polytopes.
In types $B$ and $D$, we compute low-dimensional examples and formulate open questions.
\end{abstract}

\maketitle

\section{Introduction}
Toric geometry and theory of Newton polytopes exhibited fruitful connections between algebraic geometry and convex geometry.
After the Kouchnirenko and Bernstein--Khovanskii theorems were proved in the 1970-s (for a reminder see Section \ref{ss.toric}), Askold Khovanskii asked how to extend these results to the setting where a complex torus is replaced by an arbitrary connected reductive group.
In particular, he advertised widely the problem of finding the right analogs of Newton polytopes for non-toric varieties such as spherical varieties (classical examples of spherical varieties are reviewed in Section \ref{ss.sphere}).
Notion of Newton polytopes was extended to spherical varieties by Andrei Okounkov in the 1990-s \cite{O97,O98}.
Later, his construction was developed systematically in \cite{KK, LM}, and the resulting theory of Newton--Okounkov convex bodies is now an active field of algebraic geometry.

While Newton--Okounkov convex bodies can be defined for line bundles on arbitrary varieties (without a group action), they are easier to deal with in the case of varieties with an action of a reductive group.
In the latter case, theory of Newton--Okounkov convex bodies is closely related with representation theory.
For instance, Gelfand--Zetlin (GZ) polytopes and Feigin--Fourier--Littelmann--Vinberg (FFLV) polytopes (see Section \ref{s.GZ} for a reminder) arise naturally as Newton--Okounkov polytopes of flag varieties.

\subsection{Newton--Okounkov convex bodies}
\label{ss.toric}

In this section, we recall construction of Newton--Okounkov convex bodies for the general mathematical audience.
Let us start from the definition of Newton polytopes.

\begin{defin} Let $f=\sum_{\alpha\in\Z^n} c_\alpha x^\alpha$ be a Laurent polynomial in $n$ variables (here the multiindex notation $x^\alpha$ for $x=(x_1,\ldots,x_n)$ and $\alpha=(\alpha_1,\ldots,\alpha_n)\in\Z^n$ stands for $x_1^{\alpha_1}\cdots x_n^{\alpha_n}$).
The {\em Newton polytope} $\Delta_f\subset\R^n$ is the convex hull of all $\alpha\in \Z^n$ such that $c_\alpha\ne0$.
\end{defin}
By definition, Newton polytope is a lattice polytope, that is, its vertices lie in $\Z^n$.

\begin{example}\label{e.hyp} For $n=2$ and $f=1+2x_1+x_2+3x_1x_2$, the Newton polytope $\Delta_f$ is the square with
the vertices $(0,0)$, $(1,0)$, $(0,1)$ and $(1,1)$.
\end{example}

Note that Laurent polynomials with complex coefficients are well-defined functions at all points $(x_1,\ldots,x_n)\in\C^n$ such that $x_1,\ldots,x_n\ne 0$.
They are regular functions on the complex torus $(\C^*)^n:=\C^n\setminus\bigcup_{i=1}^n\{x_i=0\}$.

\begin{thm}\cite{Kou}\label{t.K}
For a given lattice polytope $\Delta\subset\R^n$, let
$f_1(x_1,\ldots,x_n)$,\ldots, $f_n(x_1,\ldots,x_n)$ be a generic collection of Laurent polynomials with the Newton polytope $\Delta$.
Then the system $f_1=\ldots=f_n=0$ has $n!\Volume(\Delta)$ solutions in the complex torus $(\C^*)^n$.
\end{thm}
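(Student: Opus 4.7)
The plan is to translate the problem into toric algebraic geometry, where the count becomes a standard intersection-theoretic computation. First I would associate to the lattice polytope $\Delta$ its projective toric variety $X_\Delta$ together with the semi-ample line bundle $L_\Delta$ whose global sections have a natural basis indexed by $\Delta\cap\Z^n$: the monomial $x^\alpha$ corresponds to the basis vector labelled by $\alpha$. Laurent polynomials with Newton polytope contained in $\Delta$ are then precisely the sections of $L_\Delta$, and the open torus $(\C^*)^n$ embeds as the dense orbit of $X_\Delta$. Sections $s_1,\ldots,s_n$ corresponding to generic $f_1,\ldots,f_n$ with Newton polytope equal to $\Delta$ cut out a zero-dimensional subscheme, and the number of points (counted with multiplicity) equals the top self-intersection number $(L_\Delta)^n$.

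The next step is to compute $(L_\Delta)^n$. I would invoke the standard toric dictionary: for a torus-invariant semi-ample line bundle on a projective toric variety, the self-intersection number of $L_\Delta$ equals $n!$ times the Euclidean volume of $\Delta$. Conceptually this is the specialization of the Bernstein--Khovanskii mixed-volume formula to the case where all $n$ polytopes coincide with $\Delta$; in practice it can be proved by triangulating $\Delta$ and computing intersection numbers on a toric resolution one simplex at a time, or via the asymptotic Riemann--Roch formula $h^0(X_\Delta, L_\Delta^{\otimes k})\sim k^n\Volume(\Delta)$ as $k\to\infty$.

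The remaining issue is that Theorem~\ref{t.K} counts solutions only in the open torus $(\C^*)^n$, not on the full compactification $X_\Delta$. I would therefore check that for generic $f_1,\ldots,f_n$ none of the zeros of $s_1,\ldots,s_n$ lies on the toric boundary $D=X_\Delta\setminus(\C^*)^n$. The argument is a dimension count stratum by stratum: each torus-invariant stratum of $D$ corresponds to a proper face of $\Delta$, the restrictions of $s_1,\ldots,s_n$ to such a stratum are again generic sections of a lower-dimensional toric line bundle, and since the stratum has dimension strictly less than $n$, a generic system of $n$ such restricted sections has no common zero on it.

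The main obstacle I anticipate is making the genericity statement precise, especially when $X_\Delta$ is singular or when $\Delta$ fails to be full-dimensional (so that $L_\Delta$ is only semi-ample). Both issues can be handled by passing to a toric resolution $\widetilde{X}_\Delta\to X_\Delta$, where Bertini's theorem applies to the base-point-free linear system $|L_\Delta|$ and guarantees that the generic intersection is transverse, reduced, and supported away from the exceptional locus; pulling back respects intersection numbers, so the count $n!\Volume(\Delta)$ is preserved. The classical proofs (due to Kouchnirenko, with later refinements by Khovanskii and Bernstein) essentially amount to assembling these three ingredients into a single statement.
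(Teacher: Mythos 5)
The paper does not prove Theorem~\ref{t.K}: it is stated as a known result with only a citation to Kouchnirenko's paper \cite{Kou}, so there is no internal proof to compare against. Your toric-geometric argument is correct and is the standard modern route: identify Laurent polynomials supported in $\Delta$ with global sections of the nef line bundle $L_\Delta$ on the toric variety $X_\Delta$, compute the intersection number $(L_\Delta)^n = n!\Volume(\Delta)$, and then argue by a face-by-face dimension count that generic sections have no common zero on the boundary divisor. (Kouchnirenko's original argument in \cite{Kou} proceeds differently, via Milnor numbers and local algebra rather than via the global toric dictionary; the approach you use crystallized slightly later, once the toric-variety formalism was in place, and has the advantage of making the reduction to Bertini and the boundary analysis completely transparent.)

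One small point worth tightening: when $\dim\Delta < n$, the variety $X_\Delta$ has dimension $\dim\Delta$ and does \emph{not} contain $(\C^*)^n$ as a dense orbit, so the sentence ``the open torus $(\C^*)^n$ embeds as the dense orbit of $X_\Delta$'' breaks down. In that case the cleanest fix is elementary and does not require a resolution: after a monomial change of coordinates the $f_i$ depend on only $\dim\Delta < n$ variables, so a generic system of $n$ such equations has no solutions in $(\C^*)^n$, matching $n!\Volume(\Delta)=0$. Alternatively, work on a smooth projective toric variety $X_\Sigma$ of dimension $n$ whose fan refines the normal fan of $\Delta$; there $L_\Delta$ is semi-ample but not big, and $(L_\Delta)^n=0$ directly. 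Either patch closes the gap cleanly.
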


The Kouchnirenko theorem can be viewed as a generalization of the classical Bezout theorem.
The Newton polytope serves as a refinement of the degree of a polynomial.
This makes the Kouchnirenko theorem applicable to collections of polynomials which are not generic among all polynomials of given degree but only among polynomials with given Newton polytope.
For instance, the Kouchnirenko theorem applied to a pair of generic polynomials with Newton polytope  as in Example \ref{e.hyp} yields the correct answer $2$ while Bezout theorem yields an incorrect answer $4$ (because of two extraneous solutions at infinity).
A more geometric viewpoint on the Bezout theorem and its extensions stems from enumerative geometry and will be discussed in Section \ref{ss.sphere}.
The Koushnirenko theorem was extended to the systems of Laurent polynomials with distinct Newton polytopes by David Bernstein
and Khovanskii using mixed volumes of polytopes \cite{B75}.
Further generalizations include explicit formulas for the genus and Euler characteristic of complete intersections $\{f_1=0\}\cap\ldots\cap\{f_m=0\}$ in $(\C^*)^n$ for $m<n$ \cite{Kh78}.

We now consider a bit more general situation.
Fix a finite-dimensional vector space $V\subset\C(x_1,\ldots,x_n)$ of rational functions on $\C^n$.
Let $f_1$,\ldots, $f_n$ be a generic collection of functions from $V$, and $X_0\subset\C^n$ an open dense subset obtained by removing poles of these functions.
How many solutions does a system $f_1=\ldots=f_n=0$ have in $X_0$?
For instance, if $V$ is the space spanned by all Laurent polynomials with a given Newton polytope, and $X_0=(\C^*)^n$, then
the answer is given by the Kouchnirenko theorem.
Here is a simple non-toric example from representation theory.

\begin{example}\label{e.flag}
Let $n=3$.
Consider the adjoint representation of $SL_3(\C)$ on the space ${\rm End}(\C^3)$ of all linear operators on $\C^3$.
That is, $g\in SL_3(\C)$ acts on an operator $X\in{\rm End}(\C^3)$ as follows:
$${\rm Ad}(g): X\mapsto gXg^{-1}.$$
Let $U^-\subset SL_3(\C)$ be the subgroup of lower triangular unipotent matrices:
$$U^-=\left\{\begin{pmatrix}1&0&0\\x_1&1&0&\\x_2&x_3&1&\\ \end{pmatrix} \ |  \ (x_1,x_2,x_3)\in \C^3\right\}.$$
To define a subspace $V\subset\C(x_1,x_2,x_3)$ we restrict functions from the dual space ${\rm End}^*(\C^3)$ to the $U^-$-orbit ${\rm Ad}(U^-)E_{13}$ of the operator $E_{13}:=e_1\otimes e_3^*\in {\rm End}(\C^3)$ (here ($e_1$, $e_2$, $e_3$) is the standard basis in $\C^3$).
More precisely, a linear function $f\in {\rm End}^*(\C^3)$ yields the polynomial $\hat f(x_1,x_2,x_3)$ as follows:
$$\hat f(x_1,x_2,x_3):=f\left(\begin{pmatrix}1&0&0\\x_1&1&0&\\x_2&x_3&1&\\ \end{pmatrix} \begin{pmatrix}0&0&1\\0&0&0&\\0&0&0&\\ \end{pmatrix} \begin{pmatrix}1&0&0\\x_1&1&0&\\x_2&x_3&1&\\ \end{pmatrix}^{-1} \right)$$
It is easy to check that the space $V$ is spanned by 8 polynomials: $1$, $x_1$, $x_2$, $x_3$, $x_1x_2-x_1^2x_3$, $x_1x_3$,  $x_2x_3$, $x_2^2-x_1x_2x_3$.
It will be clear from the next section that the Kouchnirenko theorem does not apply to the space $V$, that is, the normalized volume of the Newton polytope of a generic polynomial from $V$ is bigger than the number of solutions of a generic system $f_1=f_2=f_3=0$ with $f_i\in V$.
\end{example}

To assign the {\em Newton--Okounkov convex body} to $V$ we need an extra ingredient.
Choose a translation-invariant total order on the lattice $\Z^n$ (e.g., we can take the lexicographic order).
Consider a map
$$v:\C(x_1,\ldots,x_n)\setminus\{0\}\to\Z^n,$$
that behaves like the lowest order term of a polynomial, namely: $v(f+g)\ge \min\{v(f),v(g)\}$ and $v(fg)=v(f)+v(g)$ for all nonzero $f,g$.
Recall that maps with such properties are called {\em valuations}.
A straightforward construction of valuations is shown in Example \ref{e.flag2} below.


\begin{defin}
The {\em Newton--Okounkov convex body} $\Delta_v(V)$ is the closure
of the convex hull of the set
$$\bigcup_{k=1}^\infty\left\{\frac{v(f)}{k} \ |\ f\in V^k \right\}\subset\R^n.$$
By $V^k$ we denote the subspace spanned by the $k$-th powers of the functions from $V$.
\end{defin}

Different valuations might yield different Newton--Okounkov convex bodies.
An important application of Newton--Okounkov bodies is the following analog of Kouchnirenko theorem.
Recall that by $X_0\subset\C^n$ we denoted an open dense subset where all functions from $V$ are regular (that is, do not have poles).
\begin{thm}\cite{KK,LM}\label{t.NO}
If $V$ is sufficiently big, then a generic system $f_1=\ldots=f_n=0$ with $f_i\in V$ has
$n!\Volume(\Delta_v(V))$ solutions in $X_0$.
\end{thm}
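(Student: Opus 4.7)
The plan is to interpret the number of solutions geometrically as a projective degree, and then compute that degree by asymptotic analysis of $\dim V^k$, controlled through the valuation $v$ and the convex body $\Delta_v(V)$.

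\textbf{Step 1: Geometric reformulation.} Form the graded algebra $R_V = \bigoplus_{k\ge 0} V^k$ with $V^0=\C$, and let $\varphi\colon X_0 \dashrightarrow \P(V^*)$ be the rational map defined by a basis of $V$. Let $Y\subset \P(V^*)$ be the closure of the image. I would make ``$V$ is sufficiently big'' concrete by requiring that $\varphi$ is generically injective and $R_V$ has Krull dimension $n+1$, so $\dim Y = n$. A generic system $f_1=\cdots=f_n=0$ with $f_i\in V$ corresponds, after the identification via $\varphi$, to a transverse intersection of $n$ hyperplane sections on $Y$; by Bezout on $Y$, the number of solutions in $X_0$ equals $\deg Y$.

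\textbf{Step 2: Degree via Hilbert function.} Classical projective geometry gives $\deg Y = n!\,\lim_{k\to\infty} k^{-n}\dim V^k$, reducing the problem to showing $\dim V^k \sim k^n\,\Volume(\Delta_v(V))$.

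\textbf{Step 3: From dimension to a semigroup count.} The defining inequality $v(f+g)\ge \min\{v(f),v(g)\}$ ensures that nonzero elements of $V^k$ with distinct values are linearly independent, so $\dim V^k \le \#\{v(f): 0\neq f\in V^k\}$. Since $v$ takes values in $\Z^n$ with a total order and $X_0$ has dimension $n$, the graded pieces $\{v\ge \alpha\}/\{v>\alpha\}$ are one-dimensional, upgrading the inequality to an equality $\dim V^k = s_k$, where $s_k := \#\{v(f): 0\neq f\in V^k\}$. Multiplicativity $v(fg)=v(f)+v(g)$ turns $S := \bigsqcup_k \bigl(\{v(f):0\ne f\in V^k\}\times\{k\}\bigr)$ into a subsemigroup of $\Z^n\times\Z_{\ge0}$, and by construction the closure of the slice of $\R_{\ge0}\cdot S$ at level $1$ is precisely $\Delta_v(V)$.

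\textbf{Step 4: Semigroup asymptotics.} Invoke the Kaveh--Khovanskii lemma on graded semigroups: for a semigroup $S\subset \Z^n\times\Z_{\ge 0}$ whose group envelope is of full rank and whose convex hull projects to a bounded body at each level, one has $s_k = k^n\,\Volume(\Delta_v(V)) + o(k^n)$. Chaining this with Steps 2 and 3 yields $\deg Y = n!\,\Volume(\Delta_v(V))$, which is the theorem.

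\textbf{Main obstacle.} The heart of the argument is the semigroup-asymptotics lemma in Step 4: one must approximate $S$ from inside by finitely generated subsemigroups whose lattice-point counts are handled by a Kouchnirenko-type lemma (equivalently, by the Hilbert polynomial of a toric degeneration of $R_V$). A secondary subtlety is pinning down the precise meaning of ``sufficiently big'' so that all the inequalities in Steps 1 and 3 are equalities rather than mere upper bounds; this is exactly the content of the Kaveh--Khovanskii/Lazarsfeld--Musta\c{t}\u{a} hypotheses.
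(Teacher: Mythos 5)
The paper does not supply its own proof of this statement: it is quoted from the literature, with a pointer to \cite[Theorem 4.9]{KK} for the precise hypotheses. So there is no internal proof to compare against. Your sketch correctly reproduces the standard Kaveh--Khovanskii argument: pass to the Kodaira map and reduce to a degree computation, translate the degree into asymptotics of $\dim V^k$, identify $\dim V^k$ with the count $s_k$ of achieved valuation values via one-dimensional leaves, and invoke the graded-semigroup asymptotics $s_k/k^n\to\Volume\Delta_v(V)$. One direction slip in Step 3: picking one representative per achieved value and using linear independence gives $\dim V^k\ge s_k$, while it is the one-dimensional-leaves property (each quotient $\{v\ge\alpha\}/\{v>\alpha\}$ has dimension at most one) that forces $\dim V^k\le s_k$; you stated these two inferences the other way around, though the resulting equality $\dim V^k=s_k$ is correct and is exactly what is needed. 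You also correctly flag the graded-semigroup lemma of Step 4 as the technical core; in \cite{KK} it is established by exhausting the value semigroup by finitely generated subsemigroups and applying a Kouchnirenko-type count, which is precisely the mechanism you indicate.
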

In particular, it follows that all Newton--Okounkov convex bodies for $V$ have the same volume.
For more details (in particular, for the precise meaning of ``sufficiently big'') we refer the reader to \cite[Theorem 4.9]{KK}.
\begin{example} \label{e.flag2} Let $V$ be the space from Example \ref{e.flag}.
Define a valuation $v$ by assigning to a polynomial $f\in\C[x_1,x_2,x_3]$ its lowest order term with respect to the
lexicographic ordering of monomials.
More precisely, we say that $x_1^{k_1}x_2^{k_2}x_3^{k_3}\succ x_1^{l_1}x_2^{l_2}x_3^{l_3}$ iff
there exists $j\le3$ such that $k_i=l_i$ for $i<j$ and $k_j>l_j$.
It is easy to check that $v(V)$ consists of $8$ lattice points  $(0,0,0)$, $(1,0,0)$, $(0,1,0)$, $(0,0,1)$, $(1,1,0)$,
$(1,0,1)$, $(0,1,1)$, $(0,2,0)$.
Their convex hull is depicted on Figure 1.
This is the FFLV polytope $FFLV(1,0,-1)$ for the adjoint representation of $SL_3$ (in this case, it happens to be unimodularly equivalent to the GZ polytope).
In particular, $FFLV(1,0,-1)\subset\Delta_v(V)$.
\end{example}
\begin{figure}
\begin{center}\includegraphics[width=10cm]{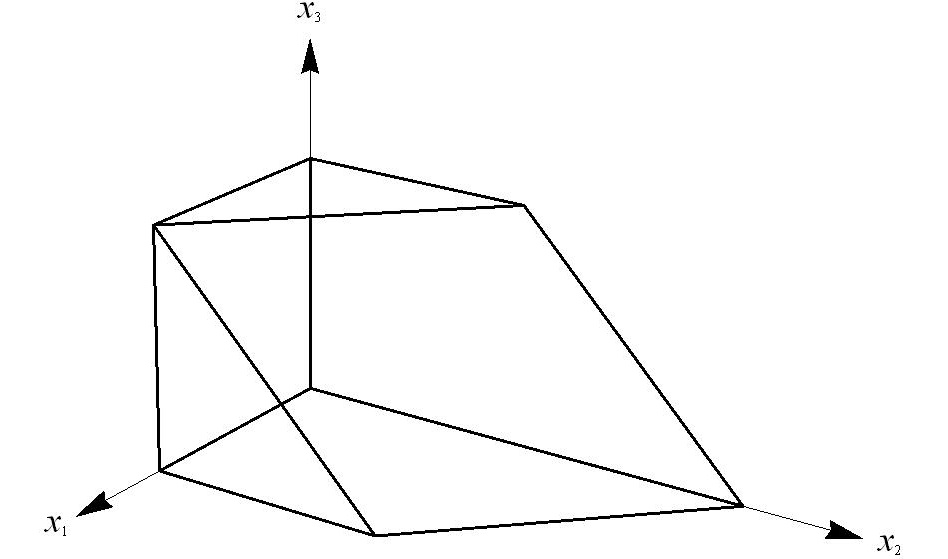}
\caption{}
\end{center}
\end{figure}

\subsection{Enumerative geometry}
\label{ss.sphere}
In this section, we give a brief introduction to enumerative geometry for the general mathematical audience.
Enumerative geometry motivated the study of Grassmannians, flag varieties and more general spherical varieties.
Recall two classical problems of enumerative geometry from the 19-th century.

\begin{problem}[Schubert]\label{p.Sch}
How many lines in a 3-space intersect four given lines in general position?
\end{problem}
We can identify lines in $\C\P^3$ with vector planes in $\C^4$, that is, a line can be viewed as a point on the
Grassmannian $G(2,4)$.
The condition that a line $l\in G(2,4)$ intersects a fixed line $l_1$ defines a hypersurface $H_1\subset G(2,4)$.
Hence, the problem reduces to computing the number of intersection points of four hypersurfaces in $G(2,4)$.
It is not hard to check that the hypersurface $H_1$ is just a hyperplane section of the Grassmannian under the Pl\"ucker
embedding $G(2,4)\hookrightarrow \P(\Lambda^2\C^4)\simeq\C\P^5$.
The image of the Grassmannian is a quadric in $\C\P^5$.
The number of intersection points of a quadric in $\C\P^5$ with
four hyperplanes in general position is equal to $2$ by the Bezout theorem.
Hence, the answer to the Schubert problem is $2$.

Schubert's problem can also be solved for real lines in $\R^3$ by elementary metods (for instance, by using two families
of lines on a hyperboloid of one sheet).
In this context, Schubert's problem was recently applied to experimental physics \cite{Phys}.

\begin{problem}[Steiner]\label{p.St}
How many smooth conics are tangent to five given conics?
\end{problem}
Similarly to the Schubert problem, we can identify conics with points in $\C\P^5$, namely, the conic given by an equation
$ax^2+bxy+cy^2+dxz+eyz+fz^2=0$ corresponds to the point $(a:b:c:d:e:f)\in\C\P^5$.
Smooth conics form an open subset $C\subset\C\P^5$ (the complement $\C\P^5\setminus C$ is the zero set of the discriminant).
The condition that a conic is tangent to a given conic defines a hypersurface in $\C\P^5$ of degree $6$.
Using Bezout theorem in $\C\P^5$ one might guess (as Jacob Steiner himself did) that the answer to the Steiner problem is $6^5$.
However, the correct answer is much smaller.
This is similar to the difference between the Bezout and Kouchnirenko theorems: the former yields
extraneous solutions that have no enumerative meaning.
The correct answer was found by Michel Chasles who used (in modern terms) a {\em wonderful compactification} of $C$, namely, the space of complete conics.

Hermann Schubert developed a powerful general method (calculus of conditions) for solving problems of enumerative geometry such as Problems \ref{p.Sch}, \ref{p.St}.
In a sense, his method was based on an informal version of intersection theory.
The 15-th Hilbert problem asked for a rigorous foundation of Schubert calculus\footnote{Das Problem besteht darin, {\em diejenigen geometrischen Anzahlen
strenge und unter genauer Feststellung der Grenzen ihrer G\"ultigkeit
zu beweisen, die insbesondere Schubert auf Grund des sogenannten Princips
der speciellen Lage mittelst des von ihm ausgebildeten
Abz\"ahlungskalk\"uls bestimmt hat} (Hilbert).}.
In the first half  of the 20-th century, these foundations were developed both in the topological (cohomology rings) and algebraic (Chow rings) settings.
However, Schubert's version of intersection theory was formalized only in the 1980-s by Corrado De Concini and Claudio Procesi \cite{CP}.

In particular, many problems of enumerative geometry (including Problems \ref{p.Sch} and \ref{p.St}) reduce to computation of the self-intersection index of a hypersurface in homogeneous space $G/H$ where $G$ is a reductive group such as $SL_n(\C)$, $SO_n(\C)$ or $Sp_{2n}(\C)$.
In the toric case ($G=(\C^*)^n$), the Kouchnirenko theorem yields an explicit formula for the self-intersection index of a hypersurface $\{f=0\}$ where $f$ is a generic polynomial with a given Newton polytope.
In the reductive case, explicit formulas were obtained by Boris Kazarnovskii (case of $(G\times G)/G^{\rm diag}$) and Michel Brion (general case) \cite{Kaz,Br89}.
Though the Brion--Kazarnovskii formula was originally stated in different terms, it can be reformulated using Newton--Okounkov polytopes \cite{KK2}.

\begin{example} \label{e.flag3} We now place Example \ref{e.flag} into the context of enumerative geometry.
Let $X=\{(V^1\subset V^2\subset \C^3) \ | \ \dim V^i=i \}$ be the variety of complete flags in $\C^3$.
This is a homogeneous space under the action of $SL_3(\C)$, namely, $X=SL_3(\C)/B$ where $B$ is the subgroup of upper-triangular matrices.
It is easy to check that $B$ acts on $X$ with an open dense orbit $U^-B/B\simeq U^-$.

We say that two flags $V^1\subset V^2$ and $W^1\subset W^2$ in $\C^3$ are not in general position if either $V^1\subset W^2$ or $W^1\subset V^2$.
How many flags in $\C^3$ are not in general position with three given flags?
By taking projectivizations of subspaces $V^1\subset V^2\subset\C^3$ we can regard a flag as $a\in l\subset\C\P^2$, where $a=\P(V^1)$ is a point and $l=\P(V^2)$ is a line on the projective plane.
Hence, we can reduce the question to the following elementary problem.
\begin{problem}[High school geometry]
There is a triangle $ABC$ on the plane.
Points $A'$, $B'$, $C'$ lie on the lines $BC$, $AC$ and $AB$, respectively.
Find all configurations $(X, YZ)$  (where a point $X$ lies on a line $YZ$) such that
$(X, YZ)$ is not in general position with the configurations $(A', BC)$, $(B', AC)$ and $(C', AB)$.
\end{problem}
It is easy to show that there are $6$ such configurations.

On the other hand, the same answer can be found using the simplest projective embedding of $X$:
$$p:X\hookrightarrow\P(\C^3)\times \P(\Lambda^2\C^3)\stackrel{\mbox{\tiny Segre}}{\hookrightarrow}\P({\rm End}(\C^3)); \quad
p:(V^1, V^2)\mapsto V^1\times V^2\mapsto V^1\otimes\Lambda^2 V^2,$$
and counting the number of intersection points of $p(X)$ with 3 generic hyperplanes in $\C\P^8$ (that is, the {\em degree} of $p(X)$).
Restricting the map $p$ to the open dense $B$-orbit $U^-\subset X$ we get that the latter problem reduces to the problem from Example \ref{e.flag}.
In particular, we can show that the inclusion $FFLV(1,0,-1)\subset\Delta_v(V)$ is an equality.
Indeed, by Theorem \ref{t.NO} the volume of $\Delta_v(V)$ times $3!$ is equal to the degree of $p(X)$, that is, to $6$.
Hence, the volume of $\Delta_v(V)$ is equal to $1$.
Since the volume of $FFLV(1,0,-1)$ is also equal to $1$, the inclusion $FFLV(1,0,-1)\subset\Delta_v(V)$ of convex polytopes implies the exact equality.
\end{example}

\section{GZ patterns and FFLV polytopes}\label{s.GZ}

In this section, we recall the definitions of GZ patterns in types $A$, $B$, $C$, $D$ and FFLV
polytopes in types $A$ and $C$.
Let $\lambda=(\lambda_1,\ldots,\lambda_n)$ denote a non-increasing collection of integers.
In what follows, we regard $\l$ as a dominant weight of a classical group.
GZ polytopes for classical groups $G$ were constructed using representation theory, namely, lattice points in the polytope $GZ(\lambda)$ parameterize the vectors of the GZ basis in the irreducible representation $V_\l$ of $G$ with the highest weight $\l$ (see \cite{M} for a survey on GZ bases).
Lattice points in FFLV polytopes  $FFLV(\l)$ parameterize a different basis in the same representation (see \cite{FFL,FFL2}).
In particular, $GZ(\l)$ and $FFLV(\l)$ have the same Ehrhart and volume polynomials.

\subsection{GZ patterns}\label{ss.GZ}
\subsubsection{Type A}
We now regard $\l$ as a dominant weight of $SL_n$.
In convex geometric terms, the GZ polytope $GZ(\l)\subset\R^d$, where $d:=\frac{n(n-1)}{2}$, is defined as the set of all points
$(u^1_1,u^1_2,\ldots, u^1_{n-1};u^2_1,\ldots,u^2_{n-2};\ldots; u^{n-1}_1)\in\R^d$ that satisfy the following interlacing inequalities:
$$
\begin{array}{cccccccccc}
\l_1&       & \l_2    &         &\l_3          &    &\ldots    & &       &\l_n   \\
    &\boxed{u^1_1}&         &\boxed{u^1_2}  &         & \ldots   &       &  &\boxed{u^1_{n-1}}&       \\
    &       &\boxed{u^2_1} &       &  \ldots &   &        &\boxed{u^2_{n-2}}&         &       \\
    &       &       &  \ddots   & &  \ddots   &      &         &         &       \\
    &       &       &  &\boxed{u^{n-2}_1}&     &  \boxed{u^{n-2}_2} &        &         &       \\
    &       &         &    &     &\boxed{u^{n-1}_1}&   &              &         &       \\
\end{array}
\eqno(GZ_A)$$
where the notation
$$
 \begin{array}{ccc}
  a &  &b \\
   & c &
 \end{array}
 $$
means $a\ge c\ge b$ (the table encodes $2d$ inequalities).

\subsubsection{Types B and C}
Let $\l$ be a dominant weight of $Sp_{2n}(\C)$, that is, all $\l_i$ are non negative.
Put $d=n^2$.
Denote coordinates in $\R^d$ by $(x^1_1,\ldots, x^1_n; y^1_1,\ldots, y^1_{n-1};\ldots; x_1^{n-1}, x^{n-1}_2, y^{n-1}_1; x^n_1)$.
For every $\l$, define the {\em symplectic GZ polytope} $SGZ(\l)\subset\R^d$ for $Sp_{2n}(\C)$ by the following interlacing inequalities:
$$
\begin{array}{ccccccccccc}
\l_1&       & \l_2  &      &\l_3   &       & \ldots   & \l_n    &         &0      &\\
    &\boxed{x^1_1}  &       &\boxed {x^1_2} &       &\ldots &          &         &\boxed{x^1_{n}}  &       &\\
    &       &\boxed{y^1_1}  &      &\boxed{y^1_2}  &       & \ldots   &\boxed{y^1_{n-1}}&         & 0     &\\
    &       &       &\boxed{x^2_1} &       &\ldots &          &         &\boxed{x^2_{n-1}}&       &\\
    &       &       &      & \boxed{y^2_1} &       & \ldots   &\boxed{y^2_{n-2}}&         & 0     &\\
    &       &       &      &       &\ddots &          & \vdots  &         &\vdots &\\
    &       &       &      &       &       & \boxed{x^{n-1}_1}&         &\boxed{x^{n-1}_2}&       &\\
    &       &       &      &       &       &          &\boxed{y^{n-1}_1}&         &     0 &\\
    &       &       &      &       &       &          &         & \boxed{x^{n}_1} &       &\\
\end{array}
\eqno{GZ_C}$$
Again, every coordinate in this table is bounded from above by its upper left neighbor and bounded from below by its upper right neighbor (the table encodes $2d$ inequalities).
Roughly speaking, $SGZ(\l)$ is the polytope defined using half of the GZ pattern $(GZ_A)$ for $SL_{2n}(\C)$.

To define the GZ polytope in type $B$ (that is, for $G=SO_{2n+1}(\C)$) we use the same pattern and inequalities but choose a bigger lattice $L\subset\R^d$ so that the standard lattice $\Z^d\subset L$ has index $2$ in $L$ (see \cite{BZ} for more details).

\subsubsection{Type D}
Let $\l$ be a dominant weight of $SO_{2n}(\C)$.
Put $d=n(n-1)$.
Denote coordinates in $\R^d$ by $(y^1_1,\ldots, y^1_{n-1};\ldots; x_1^{n-1}, x^{n-1}_2, y^{n-1}_1; x^n_1)$.
For every $\l$, define the {\em even orthogonal GZ polytope} $OGZ(\l)\subset\R^d$ for $SO_{2n}(\C)$ using the following table:
$$
\begin{array}{cccccccccc}
\l_1  &       &\l_2 &       &\ldots &          &         &\l_n  &       &\\
      &\boxed{y^1_1}  &      &\boxed{y^1_2}  &       & \ldots   &\boxed{y^1_{n-1}}&         &     &\\
      &       &\boxed{x^2_1} &       &\ldots &          &         &\boxed{x^2_{n-1}}&       &\\
      &       &      & \boxed{y^2_1} &       & \ldots   &\boxed{y^2_{n-2}}&         &      &\\      &       &      &       &\ddots &          & \vdots  &         &\vdots &\\
      &       &      &       &       & \boxed{x^{n-1}_1}&         &\boxed{x^{n-1}_2}&       &\\
      &       &      &       &       &          &\boxed{y^{n-1}_1}&         &      &\\
      &       &      &       &       &          &         & \boxed{x^{n}_1} &       &\\
\end{array}
\eqno{GZ_D}$$
Again, every coordinate in this table is bounded from above by its upper left neighbor and bounded from below by its upper right neighbor.
There are also extra inequalities for every $i=1$,\ldots ,$n-2$:
$$x^i_{n-i}+x^i_{n+1-i}+x^{i+1}_{n-i}\ge y^i_{n-i}; \quad x^{i+1}_{n-i-1}+x^i_{n+1-i}+x^{i+1}_{n-i}\ge y^i_{n-i},$$
and inequality $x^{n-1}_{1}+x^{n-1}_{2}+x^{n}_{1}\ge y^{n-1}_{1}$ (see \cite{BZ} for more details).

In what follows, we will use not GZ polytopes themselves but the GZ tables.
\begin{remark}\label{r.shape}
If we rotate GZ tables in types $A$, $B$/$C$ and $D$ by $\frac{3\pi}4$ clockwise we will get the following tables:
$$
A \begin{array}{c}
\begin{array}[t]{|c|}
\hline
\ \\
\hline
\ \\
\hline
\ \\
\hline
\end{array}
\begin{array}[t]{|c|}
\hline
\ \\
\hline
\ \\
\hline
\end{array}
\begin{array}[t]{|c|}
\hline
\ \\
\hline
\end{array}
\end{array}, \quad
B/C \begin{array}{c}
\begin{array}[t]{|c|}
\hline
\ \\
\hline
\end{array}
\begin{array}[t]{|c|}
\hline
\ \\
\hline
\ \\
\hline
\end{array}
\begin{array}[t]{|c|}
\hline
\ \\
\hline
\ \\
\hline
\ \\
\hline
\end{array}
\begin{array}[t]{|c|}
\hline
\ \\
\hline
\ \\
\hline
\end{array}
\begin{array}[t]{|c|}
\hline
\ \\
\hline
\end{array}
\end{array},\quad
D \begin{array}{c}
\begin{array}[t]{|c|}
\hline
\ \\
\hline
\end{array}
\begin{array}[t]{|c|}
\hline
\ \\
\hline
\ \\
\hline
\end{array}
\begin{array}[t]{|c|}
\hline
\ \\
\hline
\ \\
\hline
\ \\
\hline
\end{array}
\begin{array}[t]{|c|}
\hline
\ \\
\hline
\ \\
\hline
\ \\
\hline
\end{array}
\begin{array}[t]{|c|}
\hline
\ \\
\hline
\ \\
\hline
\end{array}
\begin{array}[t]{|c|}
\hline
\ \\
\hline
\end{array}
\end{array}
$$
We will use this presentation of GZ tables in the proof of Theorem \ref{t.C}.
\end{remark}

\subsection{FFLV polytopes} \label{ss.FFLV}
\subsubsection{Type A}
For every dominant weight $\l$ of $SL_n(\C)$, we now define the FFLV polytope $FFLV(\l)$.
Put $d:=\frac{n(n-1)}2$.
Label coordinates in $\R^d$ by
$(u^1_{n-1};u^2_{n-2},u^1_{n-2};\ldots;u^{n-1}_1,u^{n-2}_{1},\ldots,u^{1}_1)$.
and organize them using the GZ table $(GZ_A)$.
The polytope $FFLV(\l)$ in type $A$ is defined by inequalities
$u^l_m\ge 0$ and
$$\sum_{(l,m)\in D}u^l_m\le \l_i-\l_j$$
for all Dyck paths $D$ going from $\l_i$ to $\l_j$ in table $(FFLV)$  where $1\le i<j\le n$.
A {\em Dyck path} is a broken line whose segments either connect $u^i_j$ with $u^{i+1}_j$ or
connect $u^i_j$ with $u^i_{j+1}$.
Note that $FFLV(\l)$ only depends on the differences $(\l_1-\l_2)$,\ldots, $(\l_{n-1}-\l_n)$.
An example of FFLV polytope for $n=3$ and $\l=(1,0,-1)$ is depicted on Figure 1.
\subsubsection{Type C}
Similarly to type A case, we define FFLV polytopes in type $C$ using the corresponding GZ table $(GZ_C)$.
The only difference with type $A$ case is that we allow Dyck paths to end at one of the $0$ entry in the rightmost column of the table (see \cite{FFL2,ABS} for more details).

\section{Valuations on flag varieties}\label{s.main}
We now construct uniformly a valuation $v$ on flag varieties in types $A$, $B$, $C$ and $D$.
In types $A$ and $C$, we identify the corresponding Newton--Okounkov polytopes with FFLV polytopes.
In type $B_2$, we get a {\em symplectic DDO polytope} \cite[Section 4]{Ki16I}, which is not
combinatorially equivalent to either the FFLV or the GZ polytope in type $C_2$.
In type $D_3$, we get a polytope that is different from both GZ and FFLV polytopes in type $A_3$, however, the question of combinatorial equivalence is open.

Fix a complete flag of subspaces
$F^\bullet:=(F^1\subset F^2\subset\ldots\subset F^{n-1}\subset \C^n)$, and a basis $e_1$,\ldots, $e_n$ in $\C^n$ compatible with $F^\bullet$, that is, $F^i=\langle e_1,\ldots,  e_i\rangle$.
Define a non-degenerate symmetric bilinear $(\cdot,\cdot)$ form on $\C^n$ as follows:
$$ (e_i,e_j)=\left\{
\begin{array}{cc} 1 &\mbox{ if } i+j=n;\\
0 & \mbox{ otherwise }.
\end{array} \right.$$
Similarly, we define a non-degenerate skew symmetric form $\om(\cdot,\cdot)$ for even $n$.
For $i<j$, put 
$$\om(e_i,e_j)=-\om(e_j,e_i)=\left\{
\begin{array}{cc} 1 &\mbox{ if } i+j=n;\\
0 & \mbox{ if } i+j\ne n.
\end{array} \right.$$

Let $B\subset SL_n(\C)$ be a subgroup of upper triangular matrices with respect to the basis $e_1$,\ldots, $e_n$.
Recall that the complete flag variety $SL_n(\C)/B$ can be defined as the variety of complete flags of subspaces $M^\bullet=(\{0\}\subset V^1 \subset V^2\subset\ldots\subset V^{n-1}\subset\C^n)$.
Similarly, we regard $SO_n(\C)/B$ for any $n$ and $Sp_{n}/B$ for even $n$ as subvarieties of {\em orthogonal} and {\em isotropic} flags in $SL_n(\C)/B$ and $SL_{n}/B$, respectively.
A complete flag $M^\bullet$ in $\C^n$ is {\em orthogonal} if $V^i$ is orthogonal to to $V^{n-i}$ with respect to
$(\cdot,\cdot)$.
A complete flag $M^\bullet$ in $\C^{n}$ is called {\em isotropic} if the restriction of $\omega$ to $V^{\frac{n}{2}}$
is zero, and $V^{n-i}=\{v\in \C^{n} \ | \ \omega(v,u)=0 \mbox{ for all } u\in V^i\}$.
In particular, the flag $F^\bullet$ is orthogonal and isotropic by our choice of the forms $(\cdot,\cdot)$ and $\omega$.

Recall that if $G$ is a connected complex semisimple group (e.g., a classical group), then
the Picard group of the complete flag variety $G/B$ can be identified with the weight lattice of  $G$ \cite[1.4.2]{B}.
In particular, there is a bijection between dominant weights $\l$ and globally generated line bundles $L_\l$.
Recall also that the space of global sections $H^0(G/B,L_\l)$ is isomorphic to $V_\l^*$ where
$V_\l$ is the irreducible representation of $G$ with the highest weight $\l$.
Let $v_\l\in V_\l$ be a highest weight vector, i.e., the line $\langle v_\l\rangle\subset V_\l$ is $B$-invariant.
There is a well-defined map
$$p_\l:G/B\to\P(V_\l), \quad gB\mapsto\langle v_\l\rangle\subset \P(V_\l).$$
For instance, if $G=SL_3$ and $\l=(1,0,-1)$, then $p_\l$ coincides with the map $p$ of Example \ref{e.flag3}.
Similarly to Example \ref{e.flag} we may identify $V_\l^*$ with a subspace of $\C(G/B)$.
This amounts to fixing a global section $s_0\in H^0(G/B,L_\l)$ and identifying $s\in H^0(G/B,L_\l)$ with $\frac{s}{s_0}\in \C(G/B)$.
Denote by $\Delta_v(G/B,L_\l)\subset\R^d$ the Newton--Okounkov convex body corresponding to $G/B$,
$L_\l$ and $v$ (we denote by $d$ the dimension of $G/B$).
In what follows, we use that the normalized volume of $\Delta_v(G/B,L_\l)$
is equal by Theorem \ref{t.NO} to the degree of $p_\l(G/B)\subset\P(V_\l)$.
The latter is equal to the volume of $GZ(\l)$ and $FFLV(\l)$ by the Hilbert's theorem.

\subsection{Type A} Let $G=SL_n(\C)$.
Put $d=\frac{n(n-1)}{2}$.
Recall that the open Schubert cell $X^\circ$ with respect to $F^\bullet$ is defined as the set of all flags $M^\bullet$ that are in general position with the standard flag $F^\bullet$, i.e., all  intersections $M^i\cap F^j$ are transverse.
We can identify the open Schubert cell  $X^\circ\subset G/B$  with an affine space $\C^d$ by choosing for every flag $M^\bullet$  a basis  $v_1$,\ldots, $v_n$ in $\C^n$ of the form:
$$v_1=e_n+x^{n-1}_1 e_{n-1}+\ldots+ x^1_1 e_1, $$
$$v_2=e_{n-1}+x^{n-2}_2 e_{n-2}+\ldots+ x^1_2 e_1, \quad \ldots \quad, v_{n-1}=e_2+x^1_{n-1}e_1, \quad v_n=e_n,$$
so that $M^i=\langle v_1,\ldots,  v_i\rangle$.
Such a basis is unique, hence, the coefficients $(x^i_j)_{i+j< n}$ are coordinates on the open cell.
In other words, every flag $M^\bullet\in X^\circ$ gets identified with a triangular matrix:
$$\begin{pmatrix}
   x^1_1 & x^1_2& \ldots & x^1_{n-1} & 1\\
   x^2_1 & x^2_2   & \ldots & 1 & 0\\
   \vdots & \vdots   &  &  & \vdots\\
   x^{n-1}_1 & 1 & \ldots & 0 & 0\\
    1        & 0 &        & 0 & 0\\
  \end{pmatrix}.\eqno(*)
$$
We order the coefficients $(x^i_j)_{i+j< n}$ of this matrix by starting from column $(n-1)$ and going from top to bottom in every column and from right to left along columns.
More precisely, put $(y_1,\ldots,y_d):=(x^1_{n-1};x^1_{n-2},x^2_{n-2};\ldots;x^{1}_1,x^{2}_{1},\ldots,x^{n-1}_1)$.
For instance, if $n=4$ we get the ordering:
$$\begin{pmatrix}
   y_4 & y_2 & y_1 & 1\\
   y_5 & y_3 & 1  & 0\\
   y_6 &  1  & 0  & 0\\
   1   &  0  & 0  & 0\\
  \end{pmatrix}.
$$
We fix the lexicographic ordering on monomials in coordinates $y_1$, \ldots, $y_d$ so that $y_1\succ y_2\succ\ldots\succ y_d$.
By the lexicographic ordering we mean that  $y_1^{k_1}\cdots y^{k_d}\succ y^{l_1}\cdots y^{l_d}$ iff
there exists $j\le d$ such that $k_i=l_i$ for $i<j$ and $k_j>l_j$.

\begin{remark}\label{r.flag}
In \cite[Section 2.2]{K17}, there is a geometric construction of coordinates compatible with the flag of translated Schubert subvarieties:
$$w_0X_{\id}\subset w_0w_{d-1}^{-1}X_{w_{d-1}}\subset w_0w_{d-2}^{-1}X_{w_{d-2}}
\subset\ldots\subset w_0w_{1}^{-1}X_{w_{1}}\subset SL_n/B,$$
for $\w0=(s_1)(s_2s_1)(s_3s_2s_1)\ldots(s_{n-1}\ldots s_1)$.
Here $w_k$ denotes the $k$-th terminal subword of $\w0$, that is, $w_{d-1}=s_1$, $w_{d-2}=s_2s_1$ and so on.
It is not hard to check that coordinates
$(y_1,\ldots,y_d)$
are also compatible with the same flag, i.e.,
$w_0w_{k}^{-1}X_{w_k}\cap X^\circ=\{y_1=\ldots=y_k=0\}$.
\end{remark}

Let $v$ denote the lowest order term valuation on $\C(G/B)$, that is, if $y_1^{k_1}\cdots y_d^{k_d}$
is the lowest order term of a polynomial  $f\in\C(G/B)$ then $v(f):=(k_1,\ldots,k_d)\in\Z^d$.
For the ratio $\frac{f}{g}$ of two polynomials we put $v(\frac{f}{g}):=v(f)-v(g)$.
Let $L_\l$ be the line bundle on $G/B$ corresponding to a dominant weight
$\l:=(\l_1,\ldots,\l_n)\in\Z^n$ of $G$.

\begin{thm}{\cite[Theorem 2.1]{K17}}\label{t.A}
In type $A$, the Newton--Okounkov convex body $\Delta_v(G/B,L_\l)$ coincides with the
FFLV polytope
$FFLV(\l)$.
\end{thm}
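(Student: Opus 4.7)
The plan is to prove the two inclusions $FFLV(\l)\subseteq\Delta_v(G/B,L_\l)$ and $\Delta_v(G/B,L_\l)\subseteq FFLV(\l)$ separately. The first will be established by exhibiting explicit sections whose valuations realize every lattice point of $FFLV(k\l)$, and the second by a volume comparison. Following the paragraph before the statement, I identify $H^0(G/B,L_{k\l})$ with $V_{k\l}^*$ and send $\xi\in V_{k\l}^*$ to its matrix coefficient $\tilde\xi(x):=\xi(g(x)\cdot v_{k\l})$, where $g(x)$ is the matrix $(*)$ and $v_{k\l}$ is an appropriate weight vector of $V_{k\l}$.

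The key input for the first inclusion is the Feigin--Fourier--Littelmann theorem \cite{FFL}: the vectors $\{f^{\mathbf s}v_{k\l}\}_{\mathbf s\in FFLV(k\l)\cap\Z^d}$, with $f^{\mathbf s}=\prod_{i+j<n}f_{ij}^{s^i_j}$ a PBW monomial in the root vectors $f_{ij}$ conjugate to the coordinates $x^i_j$, form a basis of $V_{k\l}$. Passing to the dual basis $\{\xi_{\mathbf s}\}\subset V_{k\l}^*$, one aims to establish the triangularity statement
$$\tilde\xi_{\mathbf s}(x)=c_{\mathbf s}\,x^{\mathbf s}+(\text{lex-lower terms}),\qquad c_{\mathbf s}\in\C^\times,$$
with $x^{\mathbf s}=\prod (x^i_j)^{s^i_j}$ and the lex order as fixed in the statement. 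Granting this, $v(\tilde\xi_{\mathbf s})=\mathbf s$ for every lattice point $\mathbf s$ of $FFLV(k\l)$, and taking the closure of the convex hull of $\bigcup_{k\ge1}v(V_{k\l}^*)/k$ yields $FFLV(\l)\subseteq\Delta_v(G/B,L_\l)$.

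The triangularity statement is the main obstacle and is ultimately a combinatorial check. In the abelianization (the PBW-degenerated action) one computes $\exp(\sum x^i_j f_{ij})\,v_{k\l}=\sum_{\mathbf s}\tfrac{x^{\mathbf s}}{\mathbf s!}f^{\mathbf s}v_{k\l}$, producing the claimed leading monomial on the nose. In the honest non-abelian group a commutator correction appears, but the chosen reading order on $(y_1,\ldots,y_d)$---right-to-left along columns of the FFL table, top-to-bottom within each column---is tailored so that any commutator $[f_{ij},f_{i'j'}]$ contributes to coefficients of PBW monomials $f^{\mathbf t}v_{k\l}$ whose associated $x$-monomials lie strictly below $x^{\mathbf s}$ in lex order. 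Combined with the FFL basis property (no non-FFL monomial can give a leading term in any coefficient), this absorbs all corrections into the lex-lower part. Carrying out this analysis rigorously, in the spirit of \cite[Sect.~2.2]{K17}, is the technical heart of the proof.

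To finish, compare volumes. Theorem \ref{t.NO} gives $d!\Volume(\Delta_v(G/B,L_\l))=\deg p_\l(G/B)$, and by Hilbert's theorem the latter equals $d!\Volume(FFLV(\l))$, since the Ehrhart polynomial of $FFLV(\l)$ agrees with $\dim V_{k\l}$. Two convex bodies of equal volume, one containing the other, must coincide, so $\Delta_v(G/B,L_\l)=FFLV(\l)$ as claimed.
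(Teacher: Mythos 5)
Your overall scaffolding---establish $FFLV(\l)\subseteq\Delta_v(G/B,L_\l)$, then close the gap by comparing volumes via Theorem~\ref{t.NO} and Hilbert's theorem---is exactly the paper's scaffolding, and your volume step is essentially verbatim. The divergence, and the problem, is in how the inclusion is obtained.

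The paper does \emph{not} work with the graded pieces $V_{k\l}^*$ and the dual of the FFL basis. Instead it reduces to fundamental weights: $V_{\om_k}^*$ is spanned by (restrictions of) Pl\"ucker coordinates, i.e.\ the $k\times k$ minors of the submatrix $A_{k,n-k}$ of $(*)$. With the chosen column-by-column ordering of the $y_i$'s the lowest-order term of any such minor is simply its diagonal product---a one-line observation about determinants requiring no commutator bookkeeping and no appeal to \cite{FFL}. These diagonal products are precisely the $0/1$-vectors with no two $1$'s on a common Dyck path, so $\conv v(V_{\om_k}^*)=FFLV(\om_k)$; superadditivity of Newton--Okounkov bodies plus the Minkowski decomposition $FFLV(\l)=\sum m_i FFLV(\om_i)$ then delivers the inclusion. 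The payoff of this route is that it works uniformly in types $A$ and $C$ (the fundamental representations still sit in exterior powers of the standard one), and it sidesteps entirely the point you correctly identify as ``the technical heart.''

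By contrast, your route hangs on the triangularity lemma
$\tilde\xi_{\mathbf s}(x)=c_{\mathbf s}x^{\mathbf s}+\text{lex-lower}$, and this is where there is a genuine gap: you assert that every commutator correction shifts the associated $x$-monomial strictly down in lex order, attributing this to the choice of reading order, but you do not verify it. This is not a routine check. The commutator $[f_{ij},f_{i'j'}]$ produces root vectors of different height, and the coefficient of a given PBW monomial $f^{\mathbf t}v_{k\l}$ in $g(x)v_{k\l}$ is a sum over many bracketing patterns; showing that all of them land in the lex-lower half-space relative to $x^{\mathbf s}$ for \emph{every} $\mathbf s\in FFLV(k\l)\cap\Z^d$ requires a careful induction on the column order, and it is exactly the kind of claim that is false for a carelessly chosen monomial order. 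As written, the ``in the spirit of \cite{K17}'' sentence is a placeholder for the hardest part of the argument. If you want to pursue this route you must either prove the triangularity lemma in full, or, better, switch to the paper's fundamental-weight reduction, where the leading-term claim degenerates to the statement that the diagonal of a minor of $A_{k,n-k}$ is lex-minimal among its monomials---which \emph{is} a routine check.
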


For instance, the computation of the polytope $\Delta_v(SL_n/B,L_\l)$ for $n=3$ and $\l=(1,0,-1)$ is illustrated in Examples \ref{e.flag}, \ref{e.flag2}, \ref{e.flag3}.
Using Remark \ref{r.flag} we could deduce Theorem \ref{t.A} directly from  \cite[Theorem 2.1]{K17}.
Below we give another proof that works simultaneously for types $A$ and $C$.

\subsection{Type $C$} Let $n=2r$ be even, and $G=Sp_n(\C)$.
Put $d=r^2$.
We define the open Schubert cell $X^\circ$ with respect to $F^\bullet$ as the set of all  isotropic flags $M^\bullet$
that are in general position with the standard flag $F^\bullet$.
Again, we can identify the open Schubert cell  $X^\circ\subset G/B$  with an affine space $\C^d$ using matrix $(*)$.
Since $M^\bullet$ is isotropic the coefficients $(x^i_j)_{i+j< n}$ are no longer independent variables.
It is not hard to check that exactly $d$ coefficients, namely, $(x^i_j)_{i+j< n, i\le j}$ are independent.
Again, we order the coordinates by starting from column $(n-1)$ and going from top to bottom in every column and from right to left along columns.
That is, put $(y_1,\ldots,y_d):=(x^1_{n-1};x^1_{n-2},x^2_{n-2};\ldots;x^{1}_r,x^{2}_{r},\ldots,x^r_r; \ldots; x^1_2,x^2_2; x^1_1)$.

It is easy to check that every $x^i_j$ for $i>j$ can be expressed as a polynomial in coordinates $y_1$,\ldots, $y_d$ with the lowest order term $x^j_i$.
In particular, there is a table inside the matrix $(*)$ whose coefficients
are coordinates on the Schubert cell $X^\circ$.
Here is an example for $n=6$ (coefficients inside the table are boxed):
$$\begin{pmatrix}
\boxed{x^1_1} & \boxed{x^1_2}   &\boxed{x^1_3} & \boxed{x^1_4}& \boxed{x^1_{5}}   & 1\\
x^2_1         & \boxed{x^2_2}   &\boxed{x^2_3} & \boxed{x^2_4}&  1                & 0\\
x^3_1         & x^3_2           &\boxed{x^3_3} & 1            &  0                & 0\\
x^4_1         & x^4_2           &1             & 0            &  0                & 0\\
x^5_1         & 1               &0             & 0            &  0                & 0\\
1             & 0               &0             & 0            &  0                & 0\\
\end{pmatrix}.
$$
Note that the table is shaped exactly as the GZ pattern in type $C$ rotated by $\frac{3\pi}4$ clockwise (see Remark \ref{r.shape}).

Similarly to the type $A$ case, let $v$ be the lowest term valuation on $\C(G/B)$ associated with this ordering.
Let $L_\l$ be the line bundle on $G/B$ corresponding to a dominant weight
$\l:=(\l_1,\ldots,\l_r)\in\Z^r$ of $G$.
As before, denote by $\Delta_v(G,L_\l)\subset\R^d$
the Newton--Okounkov convex body corresponding to $G/B$,
$L_\l$ and $v$.

\begin{thm}\label{t.C}
In type $C$, the Newton--Okounkov convex body $\Delta_v(G/B,L_\l)$ coincides with the
FFLV polytope $FFLV(\l)$ in type $C$.
\end{thm}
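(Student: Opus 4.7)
The strategy I would pursue splits the equality $\Delta_v(G/B, L_\lambda) = FFLV(\lambda)$ into a volume-matching step and a one-sided containment. By Theorem \ref{t.NO}, the normalized volume of $\Delta_v(G/B, L_\lambda)$ equals the degree of $p_\lambda(G/B) \subset \P(V_\lambda)$, and (as recalled in the paragraph preceding Section~3.1) this degree equals the volume of $GZ(\lambda)$, hence also of $FFLV(\lambda)$ since the two polytopes share Ehrhart and volume polynomials. Consequently it suffices to prove one containment, and $FFLV(\lambda) \subseteq \Delta_v(G/B, L_\lambda)$ is the more natural direction to attack, since one can hope to realize the lattice points of $FFLV(\lambda)$ as values of $v$ on a basis of sections.

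To establish this containment I would construct, for every lattice point $\mu$ of $k \cdot FFLV(\lambda) = FFLV(k\lambda)$ and every sufficiently divisible $k$, a section $s_\mu \in H^0(G/B, L_{k\lambda})$ with $v(s_\mu/s_0^k) = \mu$. The natural candidates come from the symplectic FFLV basis from \cite{FFL2}: lattice points of $FFLV(k\lambda)$ index a PBW-type basis $\{f^{(\mu)} v_{k\lambda}\}$ of $V_{k\lambda}$, where $f^{(\mu)}$ is an ordered product of powers of negative root vectors $f_\alpha^{m_\alpha}$ whose exponents are read off from $\mu$ arranged in the rotated GZ-shaped table of Remark \ref{r.shape}. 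Passing to the dual basis of $V_{k\lambda}^* \hookrightarrow \C(G/B)$ via the embedding recalled just before Section \ref{s.main}, I obtain candidate sections $s_\mu$.

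The concrete computation, which is the technical core, is to restrict each $s_\mu$ to the open isotropic Schubert cell using the matrix parametrization $(*)$, divide by $s_0^k$, and extract the lex-leading monomial in $(y_1,\ldots,y_d)$. The guiding observation is that in the matrix realization the action of $f_\alpha$ on the highest weight vector reduces at lowest order to multiplication by the matrix entry $x^i_j$ whose indices match the position of $\alpha$ in the rotated table. Because the ordering scans this table column by column, top to bottom within each column and right to left across columns, the lex-leading term of $f^{(\mu)}v_{k\lambda}$, pulled back to $X^\circ$, works out to a nonzero scalar multiple of $y^\mu$. The symplectic Dyck path inequalities, including paths terminating at the $0$ entries of the rightmost column, then translate into exactly the bounds satisfied by these leading exponents, with the terminal $0$'s reflecting the forced values in the last column of $(*)$.

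The main obstacle I expect is the non-independence of the variables $x^i_j$ in type $C$: for $i>j$, the entry $x^i_j$ is not free but equals a polynomial in $y_1,\ldots,y_d$ whose lowest order term is $x^j_i$, as noted just before the $6 \times 6$ example. Products $f^{(\mu)}$ that involve non-simple roots therefore produce monomials in these dependent entries, and I would have to show that every extra contribution coming from the isotropy relations is strictly higher than $y^\mu$ in the lex order, so that the type $A$ leading-term analysis (Theorem \ref{t.A}) survives verbatim on the smaller cell. Verifying this monotonicity of the isotropy substitution against the prescribed column-by-column ordering, and matching it with the combinatorics of symplectic Dyck paths, is where the bulk of the work lies.
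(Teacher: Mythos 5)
Your overall strategy coincides with the paper's: prove the containment $FFLV(\lambda) \subseteq \Delta_v(G/B, L_\lambda)$ and then invoke the volume identity (Theorem \ref{t.NO} plus the fact that $\deg p_\lambda(G/B) = \Volume(GZ(\lambda)) = \Volume(FFLV(\lambda))$) to upgrade the containment to equality. Where you diverge is in how you establish the containment. The paper restricts attention to \emph{fundamental} weights $\omega_k$, observes that $H^0(G/B, L_{\omega_k})$ is spanned by pullbacks of Pl\"ucker coordinates, i.e.\ $k\times k$ minors of the submatrix $A_{k,n-k}$ of $(*)$, shows that the lex-lowest term of any such minor is the diagonal term, and concludes directly that $\conv v(V_{\omega_k}^*) = FFLV(\omega_k)$. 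The general-$\lambda$ containment then follows \emph{formally} from Minkowski superadditivity of Newton--Okounkov bodies and the Minkowski-sum decomposition $FFLV(\lambda) = \sum m_i\, FFLV(\omega_i)$. Your route instead attacks arbitrary $\lambda$ (or $k\lambda$) head-on via the symplectic FFLV PBW basis of \cite{FFL2}, aiming to show that the section dual to $f^{(\mu)} v_{k\lambda}$ has lex-leading term $y^\mu$ after pulling back to $X^\circ$. This is more direct conceptually, and if carried through would in fact establish the stronger statement that $v$ restricted to $V_{k\lambda}^*$ realizes \emph{every} lattice point of $FFLV(k\lambda)$; but it is substantially harder technically, precisely because one must control the expansion of $U(x) v_{k\lambda}$ in the PBW basis after substituting the isotropy relations $x^i_j = x^j_i + (\text{higher order})$ for $i > j$, and verify that no cancellations or lower-order contributions disturb the claimed leading term. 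The paper's device of reducing to fundamental weights sidesteps all of this: for minors, the diagonal-term observation is immediate, and the dependent variables never threaten the leading term because each dependent $x^i_j$ ($i>j$) still has lowest term $x^j_i$, which sits in a position of the GZ table that is unreachable by the $k\times(n-k)$ submatrix. So your approach is plausible but trades the paper's clean two-step reduction (Pl\"ucker minors, then superadditivity) for a single more delicate unitriangularity computation that you correctly flag as the bulk of the work but do not complete.
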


\begin{proof}
We will provide a uniform proof for types $A$ and $C$.
Note that in both types the irreducible representation corresponding to the fundamental weight $\om_k$
is contained in the $k$-th exterior power of the tautological representation $G\subset GL_n(\C)$ \cite[Exercise 15.14, Theorem 17.5]{FH}.
Hence, the space $H^0(G/B,L_{\om_k})=V_{\om_k}^*$ is spanned by the restrictions of
Pl\"ucker coordinates of the Grassmannian $G(k,n)\subset \P(\Lambda^k\C^n)$ to $X^\circ$.
More precisely, there is a map $X^\circ\subset G/B\to G(k,n)\to \P(\Lambda^k\C^n)\supset\P(V_{\om_k})$, which allows us to identify
$V_{\om_k}^*$ with a subspace of $\C(X^\circ)=\C(G/B)$ spanned by certain $k\times k$ minors of matrix $(*)$.
Namely, we take all $k\times k$ minors of the $k\times n$ submatrix  of $(*)$ formed by the first $k$ rows.
This is equivalent to taking all minors of $k\times (n-k)$ submatrix $A_{k,n-k}$ of $(*)$ with coefficients $x^i_j$ where $i\le k$ and $j\le (n-k)$.

It follows easily from the definition of the valuation $v$ that the lowest order term in any minor of matrix $A_{k,n-k}$
is the diagonal term.
Hence, $v(V_{\om_k}^*)$ consists precisely of those points with coordinates $(u^i_j)$ in $\R^d$ such that $u^i_j=0,1$ and
two nonzero $u^i_j$ never lie on the same Dyck path.
Hence, the convex hull of $v(V_{\om_k}^*)$ coincides with the FFLV polytope $FFLV(\om_i)$.
We get the inclusion $FFLV(\om_i)\subset \Delta_v(G/B,L_{\om_i})$.
By the superadditivity of Newton--Okounkov convex bodies \cite[Proposition 2.32]{KK} we also have that if
$\l=\sum m_i\om_i$ then
$$\sum m_i\Delta_v(G/B,L_{\om_i})\subset \Delta_v(G/B,L_\l),$$
where the addition in the left hand side is Minkowski sum.
By definition $FFLV(\l)=\sum m_i FFLV(\om_i)$.
Hence, we get inclusion
$$FFLV(\l)\subset \Delta_v(G/B,L_\l).$$
This inclusion is equality because both convex bodies have the same volume.
\end{proof}
\begin{remark}
The proof relies on the fact that the volume of $FFLV(\l)$ is equal to the degree of $p_\l(G/B)\subset\P(V_\l)$. 
In types $A$ and $C$, this fact has both representation theoretic \cite{FFL,FFL2} and combinatorial proofs \cite{ABS}.
In type $A$, there is also a convex geometric proof \cite[Section 4]{K17}. 
It would be interesting to check whether this proof extends to type $C$. 
\end{remark}
Similarly to the type $A$ case, the valuation $v$ in type $C$ can be defined using a flag of translated Schubert subvarieties, however, they no longer correspond to terminal subwords of any  decomposition of the longest element in the Weyl group of $G$.
For instance, if $n=4$ we get subvarieties corresponding to elements $s_2s_1s_2$, $s_1s_2$ and $s_1$ of the Weyl group.

\subsection{Type $B$} Let $n=2r+1$ be odd, and $G=SO_n(\C)$.
Put $d=r^2$.
We define the open Schubert cell $X^\circ$ with respect to $F^\bullet$ as the set of all orthogonal flags $M^\bullet$ that are in general position with the standard flag $F^\bullet$.
Again, there is a table (shaped as the GZ pattern in type $C$) inside the matrix $(*)$ whose coefficients are coordinates on the Schubert cell $X^\circ$.
Here is an example for $n=5$ (coefficients inside the table are boxed):
$$\begin{pmatrix}
x^1_1   &\boxed{x^1_2} & \boxed{x^1_3}& \boxed{x^1_{4}}   & 1\\
x^2_1   &x^2_2         & \boxed{x^2_3}&  1                & 0\\
x^3_1   &x^3_2         & 1            &  0                & 0\\
x^4_1   &1             & 0            &  0                & 0\\
1       &0             & 0            &  0                & 0\\
\end{pmatrix}.
$$

Put $(y_1,\ldots,y_d):=(x^1_{n-1};x^1_{n-2},x^2_{n-2};\ldots;x^{1}_{r+1},x^{2}_{r+1},\ldots,x^r_{r+1}; \ldots; x^1_3,x^2_3; x^1_2)$.
As before, let $v$ be the lowest term valuation on $\C(G/B)$ associated with the ordering $y_1\succ\ldots\succ y_d$.
It is easy to check that every $x^i_j$ for $i>j$ can be expressed as a polynomial in coordinates
$y_1$,\ldots, $y_d$ with the lowest order term $x^j_i$, while $x^i_i$ is a polynomial with the lowest order term $(x^i_{r+1})^2$.

While we may still use Pl\"ucker coordinates to compute $\Delta_v(G/B,L_{\om_k})$ it is no longer true that the lowest
order term in any minor of matrix $A_{k,n-k}$ is the diagonal term
(because the diagonal coefficients $x^i_i$ might contribute higher order terms).
In particular, the defining inequalities for the convex hull $P_k$ of $v(H^0(G/B,L_{\om_k}))$ will be more intricate.
Still, they can be described by generalizing the notion of Dyck paths.
It would be interesting to compare these inequalities with those of \cite{BK} (type $B_3$), see also \cite{Ma}.
To check whether the polytope $P_\l:=\sum m_iP_i$ coincides with the convex body $\Delta_v(G/B,L_{\l})$ for
$\l=\sum m_i\om_i$  we have to compare their volumes.
For instance, one could try to construct a volume preserving piecewise linear map between
$P_\l$ and the corresponding GZ-polytope in type $B$ extending the construction of \cite[Section 4.2]{K17}.

For $B_2$, it is easy to check using Pl\"ucker coordinates that the convex hull $P_\l$ of $v(H^0(G/B,L_{\l}))\subset\R^4$
for $\l=\l_1\om_1+\l_2\om_2$ contains the Minkowski sum $\l_1 P_1+\l_2 P_2$, where
$P_1$ is the 3-dimensional simplex with the vertices $(0,0,0,0)$, $(1,0,0,0)$, $(0,2,0,0)$, $(0,0,0,1)$,
and $P_2$ is the 3-dimensional simplex with the vertices $(0,0,0,0)$, $(0,1,0,0)$, $(0,0,1,0)$, $(0,0,0,1)$.
Hence, $P_\l$ is identical to the Newton--Okounkov polytope computed in \cite[Proposition 4.1]{Ki16I} (up to relabeling of coordinates).
Denote coordinates in $\R^4$ by $(u_1,u_2,u_3,u_4)$. Then $P_\l$ is given by inequalities:
$$0\le u_1, u_2, u_3, u_4; \quad  u_1 \le \l_1; \quad  u_3\le \l_2; \quad  2u_1+u_2+2(u_3+u_4) \le 2(\l_1+\l_2); $$ $$2u_1+u_2+u_3+u_4 \le 2\l_1+\l_2.$$
In particular, its volume coincides with the degree of $p_\l(G/B)\subset\P(V_\l)$.
Hence, $P_\l=\Delta_v(G/B,L_{\l})$.
Note that $P_\l$ is not combinatorially equivalent to the FFLV polytope in type $C_2$
(see \cite[Section 2.4]{K17}).

\subsection{Type $D$}
Let $n=2r$ be even, and $G=SO_n(\C)$.
Put $d=r(r-1)$.
There is a table (shaped as the GZ pattern in type $D$) inside the matrix $(*)$ whose coefficients are coordinates on the Schubert cell $X^\circ$.
Here is an example for $n=6$ (coefficients inside the table are boxed):
$$\begin{pmatrix}
x^1_1   &\boxed{x^1_2} & \boxed{x^1_3}& \boxed{x^1_{4}}   & \boxed{x^1_{5}}&1\\
x^2_1   &x^2_2         & \boxed{x^2_3}& \boxed{x^2_{4}}   & 1              &0\\
x^3_1   &x^3_2         & 0            &  1                & 0              &0\\
x^4_1   &x^4_2         & 1            &  0                & 0              &0\\
x^5_1   &1             & 0            &  0                & 0              &0\\
1       &0             & 0            &  0                & 0              &0\\
\end{pmatrix}.
$$
Put $(y_1,\ldots,y_d):=(x^1_{n-1}$; $x^1_{n-2},x^2_{n-2}$;\ldots; $x^{1}_{r+1},x^{2}_{r+1},\ldots, x^{r-1}_{r+1}$; $x^{1}_{r},x^{2}_{r},\ldots,x^{r-1}_{r}$;\ldots; $x^1_3,x^2_3; x^1_2)$, and define $v$ as before.
It is easy to check that every $x^i_j$ for $i>j$ can be expressed as a polynomial in coordinates
$y_1$,\ldots, $y_d$ with the lowest order term $x^j_i$, while $x^i_i$ is a polynomial with the lowest order term $x^i_r x^i_{r+1}$.

For $D_3$, it is not hard to check using Pl\"ucker coordinates that the convex hull $P_\l$ of $v(H^0(G/B,L_{\l}))\subset\R^6$
for $\l=\l_1\om_1+\l_2\om_2+\l_3\om_3$ contains the Minkowski sum $\l_1 P_1+\l_2 P_2+\l_3P_3$, where
$P_1$ is the 4-dimensional polytope with the vertices $(0,0,0,0,0,0)$, $(1,0,0,0,0,0)$, $(0,1,0,0,0,0)$, $(0,0,0,1,0,0)$, $(0,0,0,0,0,1)$, $(0,1,0,1,0,0)$,
$P_2$ is the 3-dimensional simplex with the vertices $(0,0,0,0,0,0)$, $(0,1,0,0,0,0)$, $(0,0,1,0,0,0)$, $(0,0,0,0,0,1)$, and
$P_3$ is the 3-dimensional simplex with the vertices $(0,0,0,0,0,0)$, $(0,0,0,1,0,0)$, $(0,0,0,0,1,0)$, $(0,0,0,0,0,1)$.

By reordering coordinates, we can get that $P_1=FFLV(\wt\om_2)$, $P_2=FFLV(\wt\om_1)$, $P_3=FFLV(\wt\om_3)$ for fundamental weights $\wt\om_1$, $\wt\om_2$, $\wt\om_3$ of $SL_4(\C)$.
However, these reorderings do not agree for different fundamental weights so it is not clear whether $P_\l$ is unimodularly equivalent to $FFLV(\l)$ in type $A_3$ (or to other known polytopes).
To compare $P_\l$ with FFLV and GZ polytopes one might write down the inequalities that define $P_\l$ and use them to count the number of facets of $P_\l$.
It would also be interesting to compute the inequalities for $P_\l$ in the case of $D_4$ and compare them with those of
\cite{G}.

\end{document}